\theoremstyle{plain}
\newtheorem{thm}{Theorem}[section]
\newtheorem{cor}[thm]{Corollary}
\newtheorem{lem}[thm]{Lemma}
\newtheorem{prop}[thm]{Proposition}
\theoremstyle{definition}
\newtheorem{defn}{Definition}[section]
\newtheorem{oss}{Remark}[section]
\newtheorem{example}{Example}[section]
\title{Betti numbers of Brill-Noether varieties \\ on a general curve}
\author{Camilla Felisetti and Claudio Fontanari}
\date{}
\DeclareMathOperator{\rank}{rank}
\newcommand{\ita}{\textit}
\begin{document}

\maketitle
\begin{abstract}
We compute the rational cohomology groups of the smooth Brill-Noether varieties $G^r_d(C)$, parametrizing linear series of degree $d$ and dimension exactly $r$ on a general curve $C$. As an application, we determine the whole intersection 
cohomology of the singular Brill-Noether loci $W^r_d(C)$, parametrizing complete 
linear series on $C$ of degree $d$ and dimension at least $r$.\\
\end{abstract}
\thanks{\noindent
\textit{2020 Mathematics Subject Classification.} 14F25,14F43, 14H51.  \\
\noindent
\textit{Keywords and phrases.} Brill-Noether loci, rational cohomology, intersection cohomology.}\\

\section{Introduction}

The algebro-geometric framework of Brill-Noether theory is at least one century old (the classical reference is \cite[Anhang G]{S21}, going back to 1921), but completely rigorous proofs were elaborated only in the late seventies and then presented in the definitive account \cite{ACGH85}. 

To make a long story short, let $C$ be a smooth projective curve. 
The Brill-Noether locus $W^r_d(C)$ parametrizes complete linear series on $C$ of 
degree $d$ and dimension at least $r$:
$$ W^r_d(C)=\left \lbrace L\in Pic^d(C): \dim \vert L\vert \geq r \right\rbrace. $$
In the same setting, one defines $G^r_d(C)$ to be the variety parametrizing linear series of degree $d$ and dimension exactly $r$ on $C$:
$$ G^r_d(C):=\left \lbrace g^{r}_d\text{'s on }C \right\rbrace. $$

If $C$ is a general curve of genus $g$, then $W^r_d(C)$ has the expected dimension 
encoded by the \textit{Brill-Noether number}
$$\rho(g,d,r):= g-(r+1)(g-d+r),$$
as first proven in modern terms in \cite{GH80}.
Moreover, $W^r_d(C)$ is a Cohen-Macaulay variety, having 
$W^{r+1}_d(C)$ as its singular locus, as established in \cite{G82}. 
In the same paper it was proven that when $\rho>0$ then $G^r_d(C)$ is smooth 
(see also \cite[(3.7)]{AC81}).
Finally, in \cite{FL81} Fulton and Lazarsfeld showed that
both $G^r_d(C)$ and $W^r_d(C)$ are connected for $\rho\geq 1$. 
Indeed, the proof relies on the description of $W^r_d(C)$ as a \ita{degeneracy locus}, which we are going to recall in Section \ref{deg}. 

Of course, connectedness of a variety $X$ can be rephrased in terms of cohomology as follows: $H^0(X) = \mathbb{Q}$. 
As pointed out in \cite[Remark 1.9]{FL81}, one expects that 
such a connectedness theorem could be extended to a Lefschetz-type result on higher 
cohomology. For Brill-Noether loci $W^r_d(C)$ this strategy has been implemented 
by Debarre in \cite{D00}, yielding a bijection $H^p(Pic^d(C)) \to H^p(W^r_d(C))$ 
in a limited range of integers $p$ (see \cite[Example (2.6)]{D00}). Debarre's 
results have been later adapted also to rational homotopy groups (see \cite[Corollary 1.7]{MP03}).

Here instead we focus on the Brill-Noether varieties $G^r_d(C)$ and at the end 
of Section \ref{bn} we are able to compute in all degrees its cohomology groups (see Theorem \ref{coh}) by adding as a crucial ingredient the computation of the Euler characteristic of $G^r_d(C)$ due to Parusi\'{n}ski and Pragacz in \cite{PP95}. 

Next we turn to the singular varieties $W^r_d(C)$. Recall that, when working with non-smooth varieties, the natural topological invariant to consider is \ita{intersection cohomology}, introduced by Goresky and MacPherson to restore all Hodge theoretic properties, such as Poincaré duality or Lefschetz theorems, failing for ordinary cohomology of singular spaces.
 
As an application of Theorem \ref{coh}, in Section \ref{ic} we determine the whole intersection 
cohomology of the Brill-Noether loci $W^r_d(C)$ (see Corollary \ref{icoh}).

We work over the complex field $\mathbb{C}$ and we consider cohomology with rational coefficients.

This research project was partially supported by GNSAGA of INdAM and by PRIN 2017 “Moduli Theory and Birational Classification”.

\section{Degeneracy loci}\label{deg}	

Let $X$ be a smooth projective variety and let $E,F$ be vector bundles of ranks respectively $e$ and $f$. Let $\gamma:E\rightarrow F$ be a morphism of vector bundles. 
Consider the $s$-th degeneracy locus of $\gamma$
$$ D_s(\gamma):=\left\lbrace x\in X\mid \rank \gamma_x\leq s\right\rbrace$$
endowed with its reduced scheme structure. Fulton and Lazarsfeld prove the following Bertini type result.

\begin{thm}[\cite{FL81}, Theorem 1.1]\label{fula}
Let $X$ be a smooth projective variety and let $\gamma:E\rightarrow F$ be a morphism of vector bundles on $X$ of ranks $e$ and $f$. If $Hom(E,F)$ is ample then 
\begin{enumerate}[(i)]
    \item $D_s(\gamma)$ is non-empty if $\delta(s)\geq 0$;
    \item $D_s(\gamma)$ is connected if $\delta(s)> 0$
\end{enumerate} where $\delta(s)= \dim X - (e-s)(f-s)$ is the expected dimension of $D_s(\gamma)$. 
\end{thm}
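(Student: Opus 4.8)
The plan is to reduce the statement to a nonemptiness/connectedness result for the zero locus of a section of an ample vector bundle, by resolving $D_s(\gamma)$ through a Grassmann bundle. Let $\pi\colon \mathbb{G} := \mathrm{Grass}_{e-s}(E) \to X$ be the bundle of rank-$(e-s)$ subspaces of the fibres of $E$, with tautological exact sequence $0 \to S \to \pi^*E \to Q \to 0$, where $\rank S = e-s$. Composing the inclusion $S \hookrightarrow \pi^*E$ with $\pi^*\gamma$ produces a section $u$ of the bundle $\mathcal{H} := \mathrm{Hom}(S, \pi^*F) = S^\vee \otimes \pi^* F$, whose zero locus is
$$ Z(u) = \{(x,V)\in \mathbb{G} : V \subseteq \ker \gamma_x\}. $$
The first projection $p\colon Z(u)\to X$ has image exactly $D_s(\gamma)$, because a subspace $V$ of dimension $e-s$ contained in $\ker\gamma_x$ exists precisely when $\rank\gamma_x \le s$; moreover its fibres over $D_s(\gamma)$ are Grassmannians $\mathrm{Grass}_{e-s}(\ker\gamma_x)$, hence nonempty and connected. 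Therefore $D_s(\gamma)$ is nonempty (resp. connected) as soon as $Z(u)$ is, and the reduction is complete once I record the numerology: since $\dim \mathbb{G} = \dim X + (e-s)s$ and $\rank \mathcal{H} = (e-s)f$, one has $\dim\mathbb{G} - \rank\mathcal{H} = \delta(s)$.

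Next I would prove that $\mathcal{H}$ is ample on $\mathbb{G}$. Dualizing the tautological sequence exhibits $S^\vee$ as a quotient of $\pi^*E^\vee$, so tensoring by $\pi^*F$ gives a surjection $\pi^*(E^\vee\otimes F) \twoheadrightarrow \mathcal{H}$. To conclude ampleness I would apply the curve criterion (Barton--Kleiman): it suffices to check that for every finite morphism $\nu\colon B \to \mathbb{G}$ from a smooth projective curve, every quotient line bundle of $\nu^*\mathcal{H}$ has positive degree. If $\pi\circ\nu$ is nonconstant, then $\nu^*\pi^*(E^\vee\otimes F) = (\pi\nu)^*\mathrm{Hom}(E,F)$ is the pullback of an ample bundle by a finite map, hence ample, and $\nu^*\mathcal{H}$ is a quotient of it, hence ample. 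If $\pi\circ\nu$ is constant with image $x$, then $\nu$ factors through the fibre $\mathrm{Grass}_{e-s}(E_x)$, on which $\mathcal{H}$ restricts to $(S^\vee)^{\oplus f}$; since the dual of the universal subbundle is ample on a Grassmannian, so is this direct sum. In either case the quotient line bundles have positive degree, so $\mathcal{H}$ is ample.

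Finally I would invoke the general principle governing zero loci of ample bundles: if $\mathcal{E}$ is an ample vector bundle of rank $r$ on an irreducible projective variety $Y$ of dimension $n$, then any section has nonempty zero locus when $r \le n$ and connected zero locus when $r \le n-1$. The nonemptiness follows from positivity of Chern classes: a nowhere-vanishing section would force the top Chern class $c_r(\mathcal{E})$ to vanish, but $c_r(\mathcal{E})$ is numerically nonzero for an ample bundle as soon as $r\le n$. Applying this on $Y = \mathbb{G}$, which is irreducible since $X$ is, with $\mathcal{E} = \mathcal{H}$, gives $Z(u)\ne\emptyset$ when $\rank\mathcal H \le \dim\mathbb G$, i.e. $\delta(s)\ge 0$, and $Z(u)$ connected when $\rank\mathcal H \le \dim\mathbb G - 1$, i.e. $\delta(s) > 0$; combined with the first paragraph this yields (i) and (ii).

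I expect the genuine obstacle to be the connectedness half of this last principle: the nonemptiness is a clean Chern-class positivity statement, whereas the connectedness of the zero locus of a section of an ample bundle is a Lefschetz-type theorem requiring real work (it is essentially the technical core of Fulton--Lazarsfeld, proved by a degeneration-to-the-diagonal argument reducing to a Fulton--Hansen-type connectedness theorem on a product). The ampleness verification in the second paragraph is the other delicate point, but the curve criterion makes it routine once one correctly separates the fibre directions from the base directions.
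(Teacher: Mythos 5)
Your first paragraph is exactly the reduction that the paper itself recalls (the Grassmann bundle $\pi\colon G=\mathrm{Gr}(e-s,E)\to X$, the tautological subbundle $S$, the section $\tau$ of $\mathcal{H}=\mathrm{Hom}(S,\pi^*F)$, and its zero locus $Y$ mapping onto $D_s(\gamma)$ with Grassmannian fibres), and the numerology $\dim G-\mathrm{rank}\,\mathcal{H}=\delta(s)$ is correct. The argument breaks down in your second paragraph: $\mathcal{H}=S^\vee\otimes\pi^*F$ is \emph{not} ample on $G$ whenever $e-s\geq 2$. Indeed, fix $x\in X$, a subspace $W_0\subset E_x$ of dimension $e-s-1$ and a subspace $W_1\supset W_0$ of dimension $e-s+1$; the pencil $B\cong\mathbb{P}^1$ of $(e-s)$-planes $V$ with $W_0\subset V\subset W_1$ lies in a single fibre of $\pi$, and along it $S$ contains the constant subbundle $W_0\otimes\mathcal{O}_B$, so $S^\vee|_B$ --- and hence $\mathcal{H}|_B\cong(S^\vee|_B)^{\oplus f}$ --- surjects onto a degree-zero line bundle. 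Thus the claim that ``the dual of the universal subbundle is ample on a Grassmannian'' holds only for $\mathrm{Gr}(1,e)=\mathbb{P}^{e-1}$, and your curve-criterion check fails precisely in the fibre direction. (A secondary issue: positivity of the degree of every quotient line bundle over every curve is in any case not a sufficient criterion for ampleness without a uniform lower bound, by Mumford's example.)

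Because $\mathcal{H}$ is not ample, the general principle you invoke in the third paragraph does not apply to it, and this is exactly where the real content of Fulton--Lazarsfeld lies: their theorem concerns zero loci of sections of bundles that are merely \emph{quotients of} $\pi^*(\mathrm{Hom}(E,F))$ for the Grassmann bundle $\pi$. Concretely, what they prove is the vanishing $H^j(G\setminus Y)=0$ for $j\geq \dim X+(f+s)(e-s)$ quoted in Proposition \ref{cohomy=cohomg}, in which the fibre dimension $s(e-s)$ of $\pi$ enters on both sides and exactly restores the bound $\delta(s)$; a naive application of the ample-bundle principle (or even of Sommese's $k$-ample refinement for quotients of pullbacks of ample bundles) yields a strictly weaker range, off by $s(e-s)$. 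Similarly, nonemptiness in (i) does not follow from the nonvanishing of $c_{\mathrm{top}}$ of an ample bundle, since $\mathcal{H}$ is not ample: it follows from the Giambelli--Thom--Porteous formula together with the positivity of the corresponding Schur polynomial in the Chern classes of the ample bundle $\mathrm{Hom}(E,F)$. So your reduction is the standard and correct first step, but the theorem you feed it into must be the genuinely harder statement adapted to quotients of $\pi^*(\mathrm{ample})$, not the ample-bundle theorem itself.
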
 

The proof is based on the following construction. Suppose $e\leq f$ and let $G:=\mathrm{Gr}(e-s,E)\xrightarrow{\pi} X$ be a fiber bundle with fibre $G_x:=\mathrm{Gr}(e-s,E_x)\cong \mathrm{Gr}(e-s,e)$, i.e. any element $y\in G$ can be represented by a vector subspace $U_y\subset E_{\pi(y)}$ of dimension $e-s$.

Denote by $S$ the tautological bundle on $G$: observe that since  $S_y\cong U_y$ for any point $y\in G$, $S$ is a subbundle of $\pi^*E$.
We set $\tau$ to be the composition
$\tau:S\hookrightarrow \pi^*E\xrightarrow{\pi^*\gamma}\pi^*F,$
and define 
$$Y:=\left\lbrace y\in G\mid \tau(y)=0\right\rbrace,$$
which is called the \ita{canonical blow-up of $D_s(\gamma)$}.
Observe that the support of $Y$ is simply the set of all $y\in G$ such that $U_y\subset \ker \gamma_{\pi(y)}$. This implies in particular that for all $y\in Y$ the rank of $\gamma_{\pi(y)}$ is at most $s$, i.e. $\pi(y)\in D_s(\gamma)$.

The following facts, stated for instance in \cite[Chapter II, \S 2]{ACGH85} and \cite[(1.1)]{D00}, show that $Y$ provides a resolution on singularities of the degeneracy locus. 
\begin{lem}\label{descriptionY}
Under the assumption of Theorem \ref{fula},
let $\phi:Y\rightarrow D_s(\gamma)$ be the restriction of $\pi:G\rightarrow X$ to $Y$.
\begin{enumerate}[(i)]
    \item $Y$ is nonsingular variety of dimension $\delta(s)$;
    \item $\phi$ is a resolution of singularities of $D_s(\gamma)$;
    \item the fibre of $\phi$ over $D_l(\gamma)\setminus D_{l-1}(\gamma)$ is isomorphic to $\mathrm{Gr}(e-s,e-l)$.
\end{enumerate}
\end{lem}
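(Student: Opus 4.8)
The plan is to realize $Y$ as the zero scheme of a natural section and to read off all three assertions from this description, disposing of (iii) and the dimension count first and reserving the smoothness of $Y$ for last. By construction $\tau$ is a global section of the vector bundle $Hom(S,\pi^*F)\cong S^\vee\otimes\pi^*F$ on $G$, whose rank is $\rank(S)\cdot\rank(F)=(e-s)f$, and $Y$ is precisely its vanishing locus. Since $\pi\colon G\to X$ is a Grassmann bundle with fibre $\mathrm{Gr}(e-s,e)$, one has $\dim G=\dim X+s(e-s)$; hence, as soon as $\tau$ cuts out $Y$ in the expected codimension $(e-s)f$, we obtain
$$\dim Y=\dim X+s(e-s)-(e-s)f=\dim X-(e-s)(f-s)=\delta(s),$$
which is the dimension asserted in (i).

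For (iii) I would argue fibrewise. Fix $x\in D_l(\gamma)\setminus D_{l-1}(\gamma)$, so that $\rank\gamma_x=l$ and $\dim\ker\gamma_x=e-l$. By the description of the support recalled above, $\phi^{-1}(x)$ consists exactly of those $(e-s)$-dimensional subspaces $U\subset E_x$ with $U\subseteq\ker\gamma_x$; this is the sub-Grassmannian $\mathrm{Gr}(e-s,\ker\gamma_x)\cong\mathrm{Gr}(e-s,e-l)$, giving the claim. In particular, taking $l=s$ shows that over $D_s(\gamma)\setminus D_{s-1}(\gamma)$ the fibre is $\mathrm{Gr}(e-s,e-s)$, a single reduced point.

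The crux is the smoothness in (i), which I would establish by a local transversality computation for $\tau$. Fix $y_0\in Y$ with $x_0=\pi(y_0)$ and tautological subspace $U_0\subseteq\ker\gamma_{x_0}$; choose a splitting $E_{x_0}=U_0\oplus V_0$ with $\dim V_0=s$ and trivialise $E,F$ near $x_0$. In the standard chart of $\mathrm{Gr}(e-s,e)$ centred at $U_0$ a nearby subspace is the graph of some $A\in Hom(U_0,V_0)$, so that $(t,A)$ are local coordinates on $G$ and $\tau$ becomes the family of maps $u\mapsto\gamma_x(u)+\gamma_x(Au)$, $u\in U_0$. Differentiating at $(0,0)$, the vertical $A$-direction contributes $\dot A\mapsto\bigl(u\mapsto\gamma_{x_0}(\dot A u)\bigr)$, whose image is $Hom(U_0,\im\gamma_{x_0})$ of dimension $(e-s)l$, while the horizontal $X$-direction contributes $\dot t\mapsto\bigl(u\mapsto(D_{\dot t}\gamma)(u)\bigr)$. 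Transversality of $\tau$ to the zero section at $y_0$ therefore amounts to surjectivity of the induced map $T_{x_0}X\to Hom(U_0,\operatorname{coker}\gamma_{x_0})$, a genericity condition guaranteed in our setting (and automatic in the universal situation, where $Y$ is a vector bundle over $\mathrm{Gr}(e-s,e)$). Granting it at every point of $Y$, the section $\tau$ is regular, so $Y$ is smooth of the expected dimension $\delta(s)$, completing (i). I expect this transversality step to be the main obstacle, precisely because smoothness genuinely fails without a hypothesis controlling how $\gamma$ meets the deeper degeneracy strata.

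Finally, (ii) follows formally. The morphism $\phi\colon Y\to D_s(\gamma)$ is proper, being the restriction to the closed subvariety $Y$ of the proper map $\pi\colon G\to X$; it is surjective onto $D_s(\gamma)$ by the support description; and by the case $l=s$ of (iii) it is bijective with reduced point fibres over the dense smooth locus $D_s(\gamma)\setminus D_{s-1}(\gamma)$, hence an isomorphism there. Thus $\phi$ is a proper birational morphism from the smooth variety $Y$ of part (i), that is, a resolution of singularities of $D_s(\gamma)$.
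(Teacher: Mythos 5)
The paper offers no proof of this lemma at all: it is quoted from \cite[Chapter II, \S 2]{ACGH85} and \cite[(1.1)]{D00}, and your argument is precisely the standard one underlying those references (realize $Y$ as the zero scheme of the section $\tau$ of $S^{\vee}\otimes\pi^{*}F$, read off the fibres by linear algebra, obtain smoothness from transversality). Your part (iii) and the dimension count are complete, and part (ii) is essentially right, with two small points to tighten: you should justify that $D_s(\gamma)\setminus D_{s-1}(\gamma)$ is dense in $D_s(\gamma)$ (every component of a degeneracy locus has dimension at least $\delta(s)$, so once $\dim D_s(\gamma)=\delta(s)$ and $\delta(s-1)<\delta(s)$ no component can lie inside $D_{s-1}(\gamma)$; under the bare ampleness hypothesis, however, this density is not automatic --- e.g.\ it fails for $\gamma=0$), and the step from ``bijective with reduced point fibres'' to ``isomorphism over the open stratum'' deserves the one-line remark that a finite surjective morphism onto a reduced base whose scheme-theoretic fibres are single reduced points is an isomorphism.

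The transversality issue you flag is genuine, and it is the one place where your proof remains conditional --- but the imprecision lies in the lemma's statement, not in your reasoning. Ampleness of $Hom(E,F)$ does \emph{not} imply that $\tau$ is transverse to the zero section: in the Brill--Noether situation $Hom(E,F)$ is ample for \emph{every} smooth curve, yet $Y=G^r_d(C)$ is singular for Petri-violating curves, so item (i) is false in the stated generality. In the application the paper actually makes, the surjectivity of $T_{x_0}X\rightarrow Hom(U_0,\operatorname{coker}\gamma_{x_0})$ that you isolate is exactly the Serre dual of the injectivity of the Petri map $U_0\otimes H^0(K_C-L)\rightarrow H^0(K_C)$, where $U_0\subseteq\ker\gamma_{x_0}=H^0(L)$; its validity for a general curve is Gieseker's theorem, which the paper itself invokes in the Introduction (\cite{G82}, \cite[(3.7)]{AC81}). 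So to close your proof of (i) in the setting where the lemma is used, replace ``a genericity condition guaranteed in our setting'' by an explicit appeal to that theorem (equivalently, add a transversality or expected-dimension hypothesis on $\gamma$ to the statement, as the paper implicitly does when it restricts to a general curve). With that emendation your argument is correct.
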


The cohomology of $Y$ can be understood in terms of the cohomology of the Grassmann bundle $G$, which is computed for example in \cite[Proposition 14.6.5]{F84}. 
Indeed, we have a Lefschetz-type result.
\begin{prop}\label{cohomy=cohomg}
Under the assumption of Theorem \ref{fula},
let $\iota:Y\rightarrow G$ be the inclusion of $Y$ inside $G$. Then the map 
$$\iota^*:H^i(G)\rightarrow H^i(Y)$$
is an isomorphism for $i<\delta(s)$ and injective for $i=\delta(s).$
\end{prop}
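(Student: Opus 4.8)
The plan is to present $Y$ as the zero locus of a distinguished section of a vector bundle on $G$ and then to prove the asserted Lefschetz comparison for the pair $(G,Y)$.

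First I would fix the bundle. The composition $\tau\colon S\hookrightarrow \pi^*E\xrightarrow{\pi^*\gamma}\pi^*F$ is tautologically a global section of $\mathcal{H}:=\mathrm{Hom}(S,\pi^*F)=S^\vee\otimes\pi^*F$, a bundle of rank $(e-s)f$ on $G$, and by construction $Y=\{\tau=0\}$ is its zero scheme. Since $\dim G=\dim X+s(e-s)$, the expected codimension of a zero locus of $\mathcal{H}$ equals $(e-s)f$, so the expected dimension is $\dim X-(e-s)(f-s)=\delta(s)$; by Lemma \ref{descriptionY}(i) the variety $Y$ is smooth of this dimension, hence $\tau$ is a regular section. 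In this situation one expects a Lefschetz-type relation between $H^\bullet(G)$ and $H^\bullet(Y)$, with the threshold governed by $\dim Y=\delta(s)$, which is exactly the content of Proposition \ref{cohomy=cohomg}.

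The reduction I would carry out is the standard one through the projectivization. Let $P:=\mathbb{P}(\mathcal{H})$ with projection $\rho\colon P\to G$ and tautological quotient line bundle $\mathcal{O}_P(1)$, so that $\tau\in H^0(G,\mathcal{H})=H^0(P,\mathcal{O}_P(1))$ determines a divisor $D_\tau\in|\mathcal{O}_P(1)|$. Over a point $y\in G$ the divisor $D_\tau$ meets the fibre $\mathbb{P}(\mathcal{H}_y)$ in a hyperplane when $\tau(y)\neq 0$ and contains the whole fibre when $\tau(y)=0$; consequently $P\setminus D_\tau\to G\setminus Y$ is an affine bundle and induces an isomorphism $H^\bullet(G\setminus Y)\cong H^\bullet(P\setminus D_\tau)$. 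Comparing the long exact sequences of the pairs $(G,Y)$ and $(P,D_\tau)$, the Proposition becomes equivalent to a Lefschetz hyperplane statement for the divisor $D_\tau\subset P$, and the output threshold $i<\delta(s)$ (with injectivity at $i=\delta(s)$) is precisely what an ample-divisor Lefschetz theorem for $D_\tau$ would yield.

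The hard part is that $\mathcal{O}_P(1)$, equivalently $\mathcal{H}$, is \emph{not} ample. Dualizing $S\hookrightarrow\pi^*E$ gives a surjection $\pi^*\mathrm{Hom}(E,F)=\pi^*(E^\vee\otimes F)\twoheadrightarrow\mathcal{H}$, so $\mathcal{H}$ is nef as a quotient of the nef bundle $\pi^*\mathrm{Hom}(E,F)$; but its restriction to a fibre $G_x\cong\mathrm{Gr}(e-s,e)$ is $(S^\vee)^{\oplus f}$, and for $e-s\geq 2$ the bundle $S^\vee$ is not ample on the Grassmannian, so $\mathcal{H}$ is in general only nef and semiample. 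A power $\mathcal{O}_P(m)$ thus descends to an ample class $\mathcal{A}$ on the image $\overline P\subset\mathbb{P}_X(\mathrm{Hom}(E,F))$ of the morphism $\psi\colon P\to\overline P$ defined by $|\mathcal{O}_P(m)|$, the positivity coming from the ampleness of $\mathrm{Hom}(E,F)$ on $X$ assumed in Theorem \ref{fula}, while the positive-dimensional fibres of $\psi$ are the Grassmannians sitting over the loci where $\gamma$ drops rank. The classical Lefschetz theorem does not apply to the non-ample $D_\tau$ directly, and a crude $k$-ample substitute would degrade the range; the genuine work is to show that the cohomology carried by these contracted Grassmannian fibres is already accounted for on both sides, so that the full threshold $\delta(s)$ survives. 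This is exactly the Lefschetz theorem for degeneracy loci of Debarre \cite{D00}, whose standing hypothesis is the ampleness of $\mathrm{Hom}(E,F)$; invoking it, together with the known description of $H^\bullet(G)$ as the cohomology of a Grassmann bundle \cite[Proposition 14.6.5]{F84}, completes the argument. I expect this control of the non-ample directions to be the real obstacle.
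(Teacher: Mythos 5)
Your setup is sound and your arithmetic is exact: $Y$ is the zero scheme of the section $\tau$ of $\mathcal{H}=S^\vee\otimes\pi^*F$, the expected dimension comes out to $\delta(s)$, and the passage to $P=\mathbb{P}(\mathcal{H})$ with the identification $H^\bullet(G\setminus Y)\cong H^\bullet(P\setminus D_\tau)$ does translate the Proposition, via duality, into the vanishing $H^j(P,D_\tau)=0$ for $j\leq \dim P-1$, i.e.\ a Lefschetz statement for the non-ample divisor $D_\tau$. You are also right that $\mathcal{H}$ is only nef (its restriction to a fibre is $(S^\vee)^{\oplus f}$, and $S^\vee$ is not ample on $\mathrm{Gr}(e-s,e)$ once $e-s\geq 2$), so the classical Lefschetz hyperplane theorem does not apply directly. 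But at exactly that point your argument stops: the entire content of the Proposition is delegated to ``the Lefschetz theorem for degeneracy loci of Debarre,'' which in \cite{D00} is essentially this very statement about the pair $(G,Y)$, so as written your proof is a citation of an equivalent result rather than a derivation. The paper takes a shorter and logically prior route: it quotes \cite[Proposition 1.2]{FL81}, namely the cohomological-dimension bound $H^j(G\setminus Y)=0$ for $j\geq \dim X+(f+s)(e-s)$ (which Fulton and Lazarsfeld prove precisely through the $\mathbb{P}(\mathcal{H})$ geometry you reconstruct), converts it by Lefschetz duality into $H^i(G,Y)=0$ for $i\leq\delta(s)$, and concludes with the long exact sequence of the pair. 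So your reduction correctly rebuilds the inside of Fulton--Lazarsfeld's proposition, but the step you yourself flag as ``the real obstacle'' --- controlling the contracted Grassmannian fibres of the semiample $\mathcal{O}_P(1)$ --- is the one that must actually be carried out or pinned to the precise prior vanishing result; quoting \cite[Proposition 1.2]{FL81} does this more economically than invoking Debarre's theorem, which would make the Proposition a restatement rather than a consequence.
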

\begin{proof}
By \cite[Proposition 1.2]{FL81} we have that $H^j(G\setminus Y)=0$ for $j\geq \dim X +(f+s)(e-s)$, so Lefschetz duality implies $H^{2\dim G-j}(G,Y)=H^j(G\setminus Y)=0$.

Setting $i=2\dim G-j$ and noticing that $\dim G=\dim X+s(e-s)$, we see that $j\geq \dim X +(f+s)(e-s)$ if and only if $i\leq \delta(s)$. 
As a result 
$$H^i(G,Y)=0 \quad \forall i\leq \delta(s).$$
The statement now follows applying the long exact sequence for the relative cohomology. 
\end{proof}

\section{Brill-Noether theory}\label{bn}

In this section we recall for ease of the reader the classical description of the  Brill-Noether loci $W^r_d(C)$ for a smooth curve $C$ as degeneracy loci $D_s(\gamma)$ for a suitable morphism $\gamma$ between vector bundles on $Pic^d(C)$. 
For further details we refer to \cite[Chapter IV, \S 3]{ACGH85}.
 
Let $C$ be a smooth projective curve and denote by $\mathcal{L}$ a fixed Poincaré line bundle on $Pic^d(C)\times C$. Let $D$ be a fixed divisor of degree 
$$deg(D)=f\geq 2g-d-1$$
on $C$ and let $\Gamma=Pic^d(C)\times D$ be the corresponding the product divisor on $Pic^d(C)\times C$.
Denoting by $\nu:Pic^d(C)\times C\rightarrow Pic^d(C)$ the projection onto the first factor, we have that 
$\nu_*\mathcal{L}(\Gamma)$ is a vector bundle on $Pic^d(C)$ of rank 
$$e= d+f-g+1.$$
%Moreover, by \cite{ACGH85}[Theorem 2.6], the hypothesis on the degree of $D$ guarantees 
%$$R^1\nu_*\mathcal{L}(\gamma)=0.$$
The higher direct image sequence of 
$$ 0\rightarrow \mathcal{L}\rightarrow \mathcal{L}(\Gamma)\rightarrow \mathcal{L}(\Gamma)/\mathcal{L}\rightarrow 0$$
yields a morphism of vector bundles
$$\gamma: \nu_*\mathcal{L}(\Gamma)\rightarrow \nu_*(\mathcal{L}(\Gamma)/\mathcal{L}).$$
Setting
\begin{align*}
X&=Pic^d(C),\\
E&=\nu_*\mathcal{L}(\Gamma),\\
F&=\nu_*(\mathcal{L}(\Gamma)/\mathcal{L}),  \\    
s&=f+d-g-r,
\end{align*}
one has that $Hom(E,F)$ is ample and 
\begin{equation}\label{brilldeg}
W^r_d(C)=D_s(\gamma).
\end{equation}
A simple computation shows that 
$$\rho(g,r,d)=\delta(s),$$
i.e. the expected dimension of $W^r_d(C)$ coincides with the expected dimension of the corresponding degeneracy locus.

By applying the construction in Section \ref{deg}, we can define the Grassmann bundle $\mathcal{G}$ on $Pic^d(C)$ given by the set of $(r+1)$-planes in $E$.
In this setting, it is easy to see that variety $Y$ is exactly the space $G^r_d(C)$ parametrizing $r-$dimensional linear series on $Pic^d(C)$. By Lemma \ref{descriptionY} we have that  $\phi:G^r_d(C)\rightarrow W^r_d(C)$ is a resolution of singularities. 

Suppose now that $C$ is a general curve, so that $W^r_d(C)$ and $G^r_d(C)$ have the expected dimension $\rho(g,r,d)$. The following theorem computes the whole cohomology of $G^r_d(C)$.

%Before stating the theorem we need to recall some results by %Pragacz and Parusinski \cite{PP95}, who compute the Euler characteristic of $G^r_d(C)$.
%\begin{defn}
%A partition is a sequence of non-negative integers %$I=(i_1,\ldots,i_k)$ with $i_1\geq i_2\geq\ldots\geq i_k\geq 0$.
%We denote by $\vert I\vert:=\sum i_p$.
%\end{defn}

\begin{thm}\label{coh}
Let $C$ be a general curve of genus $g$. Then there is a linear isomorphism
$$
H^i(G^r_d(C))\cong
\begin{cases} 
H^i(\mathcal{G}) &\text{ if }i< \rho\\
\mathbb{Q}^k, \ k=(-1)^{\rho}(\Phi- 2\sum_{j<\rho}
(-1)^j b_j(\mathcal{G})) &\text{ if }i=\rho\\
H^{2\rho-i}(\mathcal{G}) &\text{ if }i>\rho\\
\end{cases}
$$
where $\rho = \rho(g,r,d)$, $b_j$ is the $j$-th Betti number, and  $\Phi = \Phi(g,d,r)$ denotes as in \cite[p.815]{PP95}, the right hand side of the formula of \cite[Lemma 3.2]{PP95}. 
\end{thm}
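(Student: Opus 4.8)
The plan is to determine $H^i(G^r_d(C))$ separately in the three ranges $i<\rho$, $i>\rho$ and $i=\rho$, using the Lefschetz-type comparison of Proposition \ref{cohomy=cohomg} below the middle dimension, Poincaré duality above it, and the Euler characteristic formula of Parusi\'nski--Pragacz to recover the single remaining unknown Betti number. Recall from the discussion in Section \ref{bn} that the variety $Y$ of Section \ref{deg} is here $G^r_d(C)$, the Grassmann bundle $G$ is $\mathcal{G}$, and $\delta(s)=\rho$; by Lemma \ref{descriptionY}(i) the variety $G^r_d(C)$ is smooth projective of dimension $\rho$.

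First I would dispose of the low-degree range. For $i<\rho=\delta(s)$, Proposition \ref{cohomy=cohomg} asserts that $\iota^*\colon H^i(\mathcal{G})\to H^i(G^r_d(C))$ is an isomorphism, which is precisely the first case of the statement. (The injectivity of $\iota^*$ at $i=\rho$ from the same proposition is not strictly needed, but provides the consistency check $b_\rho(\mathcal{G})\le b_\rho(G^r_d(C))$.) Next I would treat the range $i>\rho$ by Poincaré duality: since $G^r_d(C)$ is a smooth complex projective variety of complex dimension $\rho$, one has $b_i(G^r_d(C))=b_{2\rho-i}(G^r_d(C))$ for all $i$. When $i>\rho$ the index $2\rho-i$ is strictly below $\rho$, so the first step gives
$$\dim H^i(G^r_d(C))=\dim H^{2\rho-i}(G^r_d(C))=\dim H^{2\rho-i}(\mathcal{G}),$$
yielding the claimed linear isomorphism $H^i(G^r_d(C))\cong H^{2\rho-i}(\mathcal{G})$ in the third case.

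At this point every Betti number of $G^r_d(C)$ except $b_\rho$ has been pinned down, so the middle term is forced by the total Euler characteristic alone. I would compute $\chi(G^r_d(C))=\sum_{i=0}^{2\rho}(-1)^i b_i(G^r_d(C))$, split the sum at $i=\rho$, and substitute $b_i=b_i(\mathcal{G})$ for $i<\rho$ together with $b_i=b_{2\rho-i}(\mathcal{G})$ for $i>\rho$. Reindexing the upper range by $j=2\rho-i$ and using $(-1)^{2\rho-j}=(-1)^j$, the two symmetric tails coincide and one obtains
$$\chi(G^r_d(C))=2\sum_{j<\rho}(-1)^j b_j(\mathcal{G})+(-1)^\rho b_\rho.$$
Solving for $b_\rho$ and invoking \cite[Lemma 3.2]{PP95}, which computes $\chi(G^r_d(C))=\Phi(g,d,r)$, gives $b_\rho=(-1)^\rho\bigl(\Phi-2\sum_{j<\rho}(-1)^j b_j(\mathcal{G})\bigr)$, i.e. the middle case.

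I expect the main obstacle to lie not in this essentially formal assembly but in correctly importing the external input: one must verify that the Parusi\'nski--Pragacz formula genuinely computes the topological Euler characteristic of the smooth resolution $G^r_d(C)$ itself, with the sign and normalization conventions matching those of $\Phi$, rather than the Euler characteristic of the singular locus $W^r_d(C)$, so that the substitution $\chi(G^r_d(C))=\Phi$ is legitimate. Once that identification is secured, the remaining steps are purely a matter of Poincaré duality and Proposition \ref{cohomy=cohomg}.
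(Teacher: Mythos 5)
Your proposal is correct and follows essentially the same route as the paper's own proof: Proposition \ref{cohomy=cohomg} for degrees below $\rho$, Poincar\'e duality for degrees above $\rho$, and the Parusi\'nski--Pragacz Euler characteristic computation to solve for the single remaining middle Betti number. The only slight imprecision is the citation at the end: the identity $\chi(G^r_d(C))=\Phi(g,d,r)$ is the content of \cite[Theorem 3.8]{PP95}, whereas \cite[Lemma 3.2]{PP95} merely supplies the formula whose right-hand side defines $\Phi$.
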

\begin{proof}
Observe that by Proposition \ref{cohomy=cohomg}
$$H^i(G^r_d(C))\cong H^i(\mathcal{G})\quad \forall i<\rho(g,r,d).$$
Since $G^r_d(C)$ is smooth of dimension $\rho(g,r,d)$, the result for $i>\rho(g,r,d)$ follows from Poincaré duality. 
Finally, the dimension of the middle cohomology group $H^{\rho(g,r,d)}(G^r_d(C))$
follows from the computation of the Euler characteristic in \cite[Theorem 3.8]{PP95}:
$$
\chi(G^r_d(C)) =  \sum_i (-1)^i\dim H^i(G^r_d(C)) = \Phi(g,d,r).
$$
\end{proof}

\begin{example}
To give an example of how our formula works, we consider for instance a smooth curve of genus $4$. We compute the cohomology of $G^0_3(C)$, which is a smooth variety of dimension $3$. We can repeat the construction in Section 2 and write $W^0_3(C)=D_3(\gamma)$, where $\gamma:E\rightarrow F$ is a linear map between two vector bundles of rank $4$ on $Pic^3(C)$. In this case $\mathcal{G}$ is isomorphic to the total space of $\mathbb{P}(E)$, so its Betti numbers can be computed via the K\"unneth formula by the projective bundle theorem.  Hence we have a linear isomorphism
$$ H^i(G^0_3(C))\cong H^i(\mathcal{G})=\begin{cases}
\mathbb{Q} &i=0,6\\
\mathbb{Q}^{8}&i=1,5\\
\mathbb{Q}^{29}&i=2,4\\
\end{cases} .$$
Following \cite[Example 3.6]{PP95} one can show that $\Phi(4,0,3)=-20$ and applying Theorem \ref{coh} we get 
$$\dim H^3(G^0_3(C))=64,$$
thus determining all cohomology groups of $G^0_3(C)$.
\end{example}

\section{Intersection cohomology}\label{ic}

Let us recall some preliminaries. For further details we refer to \cite[\S 6.2]{GM83}. 
\begin{defn}
Let $\phi:Y\rightarrow D$ be a proper map of algebraic varieties. For all $k\geq0$ let 
$$D_k=\left\lbrace x\in D\mid \dim \phi^{-1}(x)\geq k\right\rbrace.$$
We say that $\phi$ is \ita{small} if 
\begin{equation}\label{small}
\dim Y\leq \dim D_k+2k
\end{equation}
for all $k$ and equality holds only for $k=0$.
\end{defn}

\begin{prop}
Let $\phi:Y\rightarrow D$ be a small map of algebraic varieties. Then
$$IH^i(Y)=IH^i(D) \quad \forall i\geq 0.$$
In particular, if $Y$ is smooth then we have $H^i(Y)=IH^i(Y)=IH^i(D)$.
\end{prop}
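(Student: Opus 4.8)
The plan is to deduce the isomorphism $IH^i(Y) \cong IH^i(D)$ from the sheaf-theoretic identity $R\phi_* IC_Y^\bullet \cong IC_D^\bullet$ between the intersection cohomology complexes, smallness of $\phi$ being precisely the hypothesis that makes this identity hold. Writing $IH^i(Y) = \mathbb{H}^i(Y, IC_Y^\bullet)$, the Leray equality $R\Gamma(Y,-) = R\Gamma(D, R\phi_*(-))$ gives $\mathbb{H}^i(Y, IC_Y^\bullet) = \mathbb{H}^i(D, R\phi_* IC_Y^\bullet)$ for all $i$; hence once we know $R\phi_* IC_Y^\bullet \cong IC_D^\bullet$ we obtain $IH^i(Y) = \mathbb{H}^i(D, IC_D^\bullet) = IH^i(D)$ at once. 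Here we use that $\phi$ is proper and surjective with $\dim Y = \dim D$, which is the situation of interest (e.g. the resolution $\phi\colon G^r_d(C)\to W^r_d(C)$).

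The core of the argument is establishing this isomorphism of complexes, which I would carry out via Deligne's axiomatic characterization of the intersection complex: $IC_D^\bullet$ is the unique (up to quasi-isomorphism) complex on $D$ extending the shifted constant sheaf on the smooth locus and satisfying the support and cosupport (attachment) conditions that bound the dimensions of the supports of its cohomology sheaves along a stratification. The strategy is to stratify $D$ by the locally closed loci $D_k \setminus D_{k+1}$ and to verify that $R\phi_* IC_Y^\bullet$ meets these bounds. The smallness inequality $\dim Y \le \dim D_k + 2k$, combined with the fibre-dimension estimate $\dim \phi^{-1}(x) \le k$ over $D_k$, translates directly into the support condition for the pushforward; the requirement that equality hold only for $k=0$ furnishes the strict inequality on the positive-codimension strata needed to rule out any cohomology in the degrees forbidden by the IC axioms.

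I expect the verification of the cosupport condition to be the main obstacle: the support bound follows fairly mechanically from the fibre-dimension estimate, whereas the cosupport (second support) condition is exactly where the strictness of smallness for $k>0$ is indispensable, and one must control the costalks of $R\phi_* IC_Y^\bullet$ along the deeper strata. The cleanest route is to invoke Verdier duality together with the self-duality of $IC_D^\bullet$ (and of $IC_Y^\bullet$, using properness so that $R\phi_* = R\phi_!$ commutes with the dualizing functor), thereby reducing the cosupport condition for $R\phi_* IC_Y^\bullet$ to a support condition for its dual, already handled in the previous step.

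Finally, the \emph{in particular} statement is immediate: when $Y$ is smooth of dimension $n$ the intersection complex reduces to the shifted constant sheaf $IC_Y^\bullet \cong \mathbb{Q}_Y[n]$, so that $IH^i(Y) \cong H^i(Y)$ in the conventions used here. Combining this with the isomorphism $IH^i(Y) = IH^i(D)$ just established yields $H^i(Y) = IH^i(Y) = IH^i(D)$ for all $i \ge 0$.
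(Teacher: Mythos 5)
The paper does not prove this proposition at all: it is recalled as a classical result of Goresky--MacPherson, with a pointer to \cite[\S 6.2]{GM83}, and the text moves straight on to verifying smallness for the Brill--Noether resolution. Your argument is the standard modern proof of that cited result --- reduce to the sheaf-level identity $R\phi_*IC_Y^\bullet\cong IC_D^\bullet$, check the support condition via proper base change and the fibre-dimension strata $D_k$, and obtain the cosupport condition for free from Verdier self-duality of $IC$ and the properness of $\phi$ --- and it is essentially correct, so it genuinely supplies what the paper delegates to the reference. One caveat worth making explicit: for the uniqueness step in Deligne's characterization to return $IC_D^\bullet$ with \emph{constant} coefficients, you need $R\phi_*IC_Y^\bullet$ to restrict to the shifted constant sheaf on a dense open stratum, i.e.\ $\phi$ must be generically one-to-one (birational onto $D$), not merely proper, surjective and dimension-preserving as you state; a finite \'etale cover of degree $2$ satisfies the smallness inequalities verbatim yet has $IH^0(Y)\neq IH^0(D)$. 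This hypothesis is implicit in the paper's intended use (where $\phi$ is a resolution of singularities), but it is not literally contained in the definition of ``small'' given there, so your proof should record it.
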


We are now going to prove that the map $\phi:Y\rightarrow D_s(\gamma)$ 
introduced in Lemma \ref{descriptionY} is a small resolution of singularities. This will give us the intersection cohomology of $D_s(\gamma)$.

\begin{prop}\label{smallres}
Let $\phi:Y\rightarrow D_s(\gamma)$ be the restriction of the Grassmann bundle projection $\pi:G\rightarrow X$. Suppose that $D_l(\gamma)\setminus D_{l-1}(\gamma)$ has the expected dimension for each $l\leq s$. Then $\phi$ is a small resolution of singularities.
\end{prop}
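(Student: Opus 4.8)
The plan is to produce an explicit stratification of the target $D_s(\gamma)$ over which $\phi$ has fibres of constant, known dimension, and then to check the smallness condition one stratum at a time, reducing everything to a single elementary numerical bound. First I would record the fibre data supplied by Lemma \ref{descriptionY}(iii): writing $D_{-1}(\gamma)=\emptyset$ and stratifying $D_s(\gamma)=\bigsqcup_{l=0}^{s}\bigl(D_l(\gamma)\setminus D_{l-1}(\gamma)\bigr)$ by the rank of $\gamma$, the fibre of $\phi$ over a point of the stratum $D_l(\gamma)\setminus D_{l-1}(\gamma)$ is $\mathrm{Gr}(e-s,e-l)$, hence of constant dimension $d_l:=(e-s)(s-l)$. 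Since $e-s>0$, the function $d_l$ is strictly decreasing in $l$, with $d_s=0$: the fibres grow as one descends into deeper degeneracy loci, and $\phi$ is an isomorphism over the open stratum $l=s$. Together with Lemma \ref{descriptionY}(i), which gives that $Y$ is smooth of dimension $\delta(s)=\dim X-(e-s)(f-s)$, and with the birationality of $\phi$, this settles the case $k=0$, where $D_0=D_s(\gamma)$ and $\dim D_0=\delta(s)=\dim Y$ with equality.

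Next I would invoke the standing hypothesis. Because $D_l(\gamma)\setminus D_{l-1}(\gamma)$ has the expected dimension for every $l\le s$, each nonempty stratum has dimension exactly $\delta(l)=\dim X-(e-l)(f-l)$; moreover $\delta(l)$ is increasing in $l$ (as $(e-l)(f-l)$ is decreasing for $l<s\le e\le f$). Since $d_l$ is monotone, the locus $D_k=\{x:\dim\phi^{-1}(x)\ge k\}$ entering the definition is nothing but the closed degeneracy locus $D_{l^*}(\gamma)$, where $l^*$ is the largest index with $d_{l^*}\ge k$; the expected-dimension assumption then gives $\dim D_k=\delta(l^*)$. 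In this way smallness reduces to a stratumwise estimate: it suffices to prove $\delta(l)+2d_l<\delta(s)$ for every $l<s$, the top stratum $l=s$ providing the single equality.

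The heart of the matter is therefore this numerical inequality. Writing it out and cancelling $\dim X$, it reads $(e-s)(f+s-2l)<(e-l)(f-l)$; setting $t=s-l\ge1$ and expanding, one checks directly that it is equivalent to the bound $e+l<f+s$. This holds for all $l\le s-1$, since $e\le f$ by the construction underlying Theorem \ref{fula} and $l<s$. Granting it, for each $k\ge1$ one has $l^*<s$ (since $k\ge1$ forces $d_{l^*}\ge1$, so $l^*\neq s$) and $k\le d_{l^*}$, whence $\dim D_k+2k=\delta(l^*)+2k\le\delta(l^*)+2d_{l^*}<\delta(s)=\dim Y$, strictly. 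Combined with the equality at $k=0$, this is exactly the smallness condition.

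Finally, I would stress that the step deserving the most care is not the closed-form inequality, which collapses to the trivial bound $e\le f$, but the combinatorial bookkeeping that converts the abstract definition into the stratumwise statement. One must use the expected-dimension hypothesis precisely to guarantee that no deeper stratum acquires excess dimension that would inflate $\dim D_k$ beyond $\delta(l^*)$ and spoil the estimate, and to ensure that $D_k$ coincides with a single degeneracy locus $D_{l^*}(\gamma)$ rather than an uncontrolled union of strata. Once these identifications are secured the smallness of $\phi$ follows, and, $Y$ being smooth, the preceding proposition then identifies $IH^\bullet(D_s(\gamma))$ with $H^\bullet(Y)$, as desired.
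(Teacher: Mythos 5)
Your proof is correct and follows essentially the same route as the paper: stratify $D_s(\gamma)$ by rank, use Lemma \ref{descriptionY}(iii) to identify the loci $D_k$ with degeneracy loci $D_{l^*}(\gamma)$ whose dimensions are controlled by the expected-dimension hypothesis, and verify the smallness inequality stratum by stratum. The only difference is that you carry out explicitly the ``simple computation'' (reducing to $e+l<f+s$, which holds since $e\leq f$ and $l<s$) that the paper leaves to the reader.
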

\begin{proof}
By Lemma \ref{descriptionY}, we now that $Y$ is a resolution to singularities. Consider the stratification
$$D_0(\gamma)\subset D_1(\gamma)\subset\ldots \subset D_s(\gamma). $$
By assumption, $D_l(\gamma)\setminus D_{l-1}(\gamma)$ is a smooth variety of dimension $\delta(l)=\dim X-(e-l)(f-l)$. Moreover, by item $(ii)$ of Lemma \ref{descriptionY},
$$D_{s-l}(\gamma)=\left\lbrace x\in D_s(\gamma)\mid \dim \phi^{-1}(x)\geq l(e-s)\right\rbrace.$$
A simple computation shows that \eqref{small} is satisfied and thus $\phi$ is small.
\end{proof}

\begin{cor}\label{corih}
In the notation of Section \ref{deg} we have
$$IH^i(D_s(\gamma))\cong H^i(Y)$$
\end{cor}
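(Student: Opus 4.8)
The plan is to obtain this statement as an immediate consequence of the two preceding propositions, so that essentially all the work has already been front-loaded. First I would invoke Proposition \ref{smallres} to conclude that the resolution $\phi:Y\rightarrow D_s(\gamma)$ is small. This presupposes that each stratum $D_l(\gamma)\setminus D_{l-1}(\gamma)$ has the expected dimension, which is precisely the hypothesis driving the smallness computation; in the Brill-Noether application to a general curve this is guaranteed, so the assumption is legitimately available.

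Once smallness is in hand, I would apply the Proposition on small maps, which yields the identification $IH^i(Y)=IH^i(D_s(\gamma))$ for all $i\geq 0$. The last step is to replace the left-hand side by ordinary cohomology: since $Y$ is a nonsingular variety by Lemma \ref{descriptionY}(i), intersection cohomology coincides with singular cohomology, i.e. $H^i(Y)=IH^i(Y)$. Chaining the two identifications gives
$$H^i(Y)=IH^i(Y)=IH^i(D_s(\gamma)),$$
which is exactly the claimed isomorphism, merely read in the opposite direction.

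The main obstacle is not located in this corollary at all but upstream, in verifying the smallness inequality \eqref{small} inside Proposition \ref{smallres}. There the fibre structure of $\phi$ over each stratum—recorded in Lemma \ref{descriptionY}(iii) as $\mathrm{Gr}(e-s,e-l)$, whose dimension over $D_{s-l}(\gamma)$ one compares against $\delta(s-l)$—must be balanced against the codimensions of the strata, and equality must occur only for the open stratum. Granting that this computation goes through under the expected-dimension hypothesis, the corollary follows in one line.
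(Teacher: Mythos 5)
Your proposal is correct and follows exactly the route the paper intends: Proposition \ref{smallres} gives smallness of $\phi$, the proposition on small maps gives $IH^i(Y)=IH^i(D_s(\gamma))$, and smoothness of $Y$ (Lemma \ref{descriptionY}(i)) identifies $IH^i(Y)$ with $H^i(Y)$. You are also right to flag that the expected-dimension hypothesis of Proposition \ref{smallres} is silently carried along in the corollary's statement; the paper only verifies it later, in the Brill-Noether application, via the results of Parusi\'{n}ski and Pragacz.
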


Finally, we are able to compute the intersection cohomology of all Brill-Noether loci.

\begin{cor} \label{icoh}
Let $C$ be a general curve of genus $g$. Then there is a linear isomorphism
$$IH^i(W^r_d(C))\cong \begin{cases} 
H^i(\mathcal{G}) &\text{ if }i< \rho\\
\mathbb{Q}^k, \ k=(-1)^{\rho}(\Phi- 2\sum_{j<\rho}
(-1)^j b_j(\mathcal{G})) &\text{ if }i=\rho\\
H^{2\rho-i}(\mathcal{G}) &\text{ if }i>\rho\\
\end{cases}$$
where $\rho = \rho(g,r,d)$ and $\Phi = \Phi(g,d,r)$.
\end{cor}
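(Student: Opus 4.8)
The plan is to identify $W^r_d(C)$ with the degeneracy locus $D_s(\gamma)$ from Section \ref{bn}, where $s=f+d-g-r$, and then invoke the chain of results already established for degeneracy loci. The essential point is that Corollary \ref{icoh} is nothing but the concatenation of Theorem \ref{coh} with Corollary \ref{corih}, once we verify that the small-resolution hypothesis of Proposition \ref{smallres} applies in the Brill-Noether setting. So the proof is really a matter of checking that all the pieces fit together, and the forward-looking work is almost entirely bookkeeping.

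First I would recall the identification \eqref{brilldeg}, namely $W^r_d(C)=D_s(\gamma)$, together with the fact noted in Section \ref{bn} that the resolution $\phi\colon Y\to D_s(\gamma)$ coincides with the map $\phi\colon G^r_d(C)\to W^r_d(C)$, so that $Y=G^r_d(C)$. Next I would verify the hypothesis of Proposition \ref{smallres}, i.e. that each stratum $D_l(\gamma)\setminus D_{l-1}(\gamma)$ has the expected dimension for every $l\le s$. This is exactly where generality of the curve $C$ enters: for a general curve the Brill-Noether loci $W^l_d(C)$ all have the expected dimension $\rho(g,d,l)$ by the Gieseker-Petri / Griffiths-Harris theory cited in the Introduction, and translating this through the dictionary $W^{r'}_d(C)=D_{s'}(\gamma)$ gives precisely that each degeneracy stratum has the dimension $\delta(l)$ required. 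Granting this, Proposition \ref{smallres} shows $\phi$ is a small resolution, and Corollary \ref{corih} then yields the isomorphism $IH^i(W^r_d(C))\cong H^i(G^r_d(C))$ in every degree $i$.

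Finally I would substitute the explicit description of $H^i(G^r_d(C))$ furnished by Theorem \ref{coh}. Since $G^r_d(C)$ is smooth, the smoothness clause of the Proposition on small maps gives $H^i(G^r_d(C))=IH^i(G^r_d(C))=IH^i(W^r_d(C))$, and reading off the three cases ($i<\rho$, $i=\rho$, $i>\rho$) of Theorem \ref{coh} produces verbatim the formula claimed in Corollary \ref{icoh}, with the same $\rho=\rho(g,d,r)$, the same $\Phi=\Phi(g,d,r)$, and the same Betti numbers $b_j(\mathcal{G})$ of the Grassmann bundle. No new computation of the middle-degree dimension is needed, because it has already been extracted from the Euler characteristic of Parusi\'nski and Pragacz in the proof of Theorem \ref{coh}.

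The main obstacle I anticipate is the dimension check for the hypothesis of Proposition \ref{smallres}: one must be sure that \emph{all} the intermediate loci $D_l(\gamma)\setminus D_{l-1}(\gamma)$, and not merely the top one, are of expected dimension. For a general curve this follows from the strong form of the Brill-Noether existence and dimension theorem, which controls every $W^l_d(C)$ simultaneously; the slight care required is to match the indexing of the degeneracy strata $l$ with the Brill-Noether superscript $r'$ correctly, since $s=f+d-g-r$ shifts the parameter. Once this correspondence is pinned down, smallness is the simple numerical verification already performed inside Proposition \ref{smallres}, and the corollary follows immediately.
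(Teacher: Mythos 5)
Your proposal is correct and follows essentially the same route as the paper: identify $W^r_d(C)=D_s(\gamma)$, check that all strata have the expected dimension for a general curve so that Proposition \ref{smallres} and Corollary \ref{corih} give $IH^i(W^r_d(C))\cong H^i(G^r_d(C))$, and then read off Theorem \ref{coh}. The only cosmetic difference is the citation for the expected-dimension check (the paper invokes Parusi\'nski--Pragacz rather than the Griffiths--Harris/Gieseker results from the Introduction), which does not change the substance of the argument.
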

\begin{proof}
Consider the stratification 
$$ W^r_d(C)\supset W^{r+1}_d(C)\supset \ldots$$
Under the identification \eqref{brilldeg} we have
$W^{r+k}_d(C)=D_{s-k}(\gamma)$ and by \cite[Lemma 2.9 and observation below Theorem 3.1]{PP95} the complement $W^{r+k}_d(C)\setminus W^{r+k+1}_d(C)$ has the expected dimension. The result now follows by Proposition \ref{smallres} and Theorem \ref{coh}.
\end{proof}

Notice that this result is consistent with the computations in \cite[3.7 and 3.8]{PP95}, which imply $\chi_{IH}(W^r_d(C)) = \chi(G^r_d(C))$. 

\begin{oss}
We point out that, in the cohomological degrees $i$ in which Debarre's isomorphism $H^i(W^r_d(C))\cong Pic^d(C)$ holds, the natural map $H^i(W^r_d(C))\rightarrow IH^i(W^r_d(C))$ is an injection and it coincides with the map $H^*(Pic^d(C))\rightarrow H^*(\mathcal{G})$ given by the Leray-Hirsch theorem. Indeed, Debarre's map is an isomorphism of mixed Hodge structures and the cohomology groups $H^i(W^r_d(C))$ carry a pure Hodge structure. Since the kernel of $H^i(W^r_d(C))\rightarrow IH^i(W^r_d(C))$ is contained in the $(i-1)$-th piece $W_{i-1}$ of the weight filtration on $H^i(W^r_d(C))$ (see for instance \cite[Corollary 5.43]{PS08}), we deduce that in this case ordinary cohomology can be identified with a proper subset of intersection cohomology.
\end{oss}

\bibliographystyle{siam}
\bibliography{main}
%\begin{thebibliography}{99}

\newpage

\noindent
Camilla Felisetti \newline
Dipartimento di Matematica \newline
Universit\`a di Trento \newline
Via Sommarive 14 \newline
38123 Trento, Italy. \newline
E-mail: camilla.felisetti@unitn.it

\vspace{0.7cm}

\noindent
Claudio Fontanari \newline
Dipartimento di Matematica \newline
Universit\`a di Trento \newline
Via Sommarive 14 \newline
38123 Trento, Italy. \newline
E-mail: claudio.fontanari@unitn.it

\end{document}